\documentclass[reqno]{amsart}
\usepackage{amsmath,amsthm,amscd,amssymb,amsfonts, amsbsy}
\usepackage{latexsym, color, enumerate}
\usepackage[mathscr]{eucal}

\usepackage[hmargin=2.54 cm,vmargin=2.54 cm]{geometry}

\theoremstyle{plain}
\newtheorem{theorem}[equation]{Theorem}
\newtheorem{lemma}[equation]{Lemma}

\theoremstyle{definition}

\theoremstyle{remark}
\newtheorem{remark}[equation]{Remark}

\numberwithin{equation}{section}

\newcommand{\bR}{\mathbb{R}}

\providecommand{\norm}[1]{\lVert#1\rVert}

\begin{document}

 \title[Liouville theorem for Navier-Stokes equations in weighted mixed-norm spaces]{Liouville type theorems for 3D stationary Navier-Stokes equations in weighted mixed-norm Lebesgue spaces}

\author[T. Phan]{Tuoc Phan}
\address[T. Phan]{Department of Mathematics, University of Tennessee, 227 Ayres Hall,
1403 Circle Drive, Knoxville, TN 37996-1320 }
\email{phan@math.utk.edu}

\subjclass[2010]{35Q30, 35B53,  76D05, 76D03}
\thanks{T. Phan's research is partially supported by the Simons Foundation, grant \# 354889.}

\keywords{Liouville type theorem, Navier-Stokes equations, mixed-norm Lebesgue spaces, weighted mixed-norm Lebesgue spaces, Muckenhoupt weights, Extrapolation theory}
\begin{abstract} This work studies the system of $3D$ stationary Navier-Stokes equations. Several Liouville type theorems are established for solutions in mixed-norm Lebesgue spaces and weighted mixed-norm Lebesgue spaces. In particular, we show that,  under some sufficient conditions in mixed-norm Lebesgue spaces, solutions of the stationary Navier-Stokes equations are identically zero. This result  covers the important case that solutions may decay to zero with different rates in different spatial directions, and some these rates could be significantly slow.  In the un-mixed norm case,  the result recovers available results. With some additional geometric assumptions on the supports of solutions,  this work also provides several other important Liouville type theorems for solutions in weighted mixed-norm Lebesgue spaces.  To prove the results, we establish some new results on mixed-norm and weighted mixed-norm estimates for Navier-Stokes equations.  All of these results are new and could be useful in other studies.
\end{abstract}

\maketitle

\today
\section{Introduction and main results}

This paper investigates the system of stationary Navier-Stokes equations of incompressible fluid in $3$-dimensional space
\begin{equation} \label{NS.eqn}
\left\{
\begin{array}{cccl}
-\Delta u + (u \cdot \nabla) u + \nabla p & = & 0 & \quad \text{in} \quad \bR^3, \\
\text{div}(u) & =& 0 & \quad \text{in} \quad \bR^3,
\end{array} \right.
\end{equation}
where $u =  (u_1, u_2, u_3): \bR^3 \rightarrow \bR^3$ is the unknown velocity of the considered fluid, and $p : \bR^3 \rightarrow  \bR$ is the unknown fluid pressure. Our main interests are to establish Liouville type theorems for solutions of the equations \eqref{NS.eqn} that may decay to zero in different rates as $|x| \rightarrow \infty$ in different directions.

To put our study in perspectives, let us recall some results regarding Liouville type theorems for solutions of the Navier-Stokes equations \eqref{NS.eqn}. A function 
$$u \in \dot{H}_\sigma^1(\bR^3) =\Big\{ v \in [\dot{H}^1(\bR^3)]^3: \text{div}[v] =0 \Big\} $$ 
is said to be  a $\dot{H}_\sigma^1(\bR^3)$-weak solution of \eqref{NS.eqn} if
\begin{equation} \label{test.eqn}
\sum_{k=1}^3\int_{\bR^3}\Big[ \nabla u_k \cdot \nabla \varphi_k  + (u\cdot \nabla) u_k \phi_k\Big] dx - \int_{\bR^3} p \text{div} \phi dx =0 \quad \text{for all} \  \phi \in [C_0^\infty(\bR^3)]^3,
\end{equation}
where $H^1(\bR^3)$ denotes the usual Sobolev space and the pressure $p$ is defined by
\begin{equation} \label{pressure-formula}
p =\sum_{i, j =1}^3 \mathscr{R}_i \mathscr{R}_j (u_i u_j),
\end{equation}
in which $\mathscr{R}_i $ denotes the $i$-th Riesz transform. As $u \in \dot{H}^1_\sigma(\bR^3)$, by the Sobolev imbedding, we see that $u \in L_6(\bR^3)$. From this and the regularity theory (see \cite[Chapter 3]{Seregin-book}  for instance), we can infer that  $u \in C^\infty(\bR^3) \cap L_\infty(\bR^3)$ and
\[
|u(x)| \rightarrow 0 \quad \text{uniformly as} \ |x| \rightarrow \infty. 
\]
Obviously, zero is a $\dot{H}_\sigma^1(\bR^3) $-weak solution of \eqref{NS.eqn}. Due to various interests, it is an important  question to ask if zero is the only $\dot{H}_\sigma^1(\bR^3) $-weak solution of \eqref{NS.eqn}, see \cite[Remark X.9.4, p. 729]{Galdi} and \cite[Conjecture 2.5, p. 23]{Tsai}. 

Great efforts have been invested (for instance, see \cite{Chae-2018, Chae-Wolf, CJL-R, KTW, Seregin-1916, Seregin-2018, Seregin-Wang}) to study the above uniqueness question. However,  the question outstandingly remains to be open.  Various partial progresses are made to understand this uniqueness problem. One of the first results is due to \cite[Theorem X.9.5, p. 729]{Galdi}. Esentially, it is proved in \cite[Theorem X.9.5, p. 729]{Galdi} that if $u \in L_{\frac{9}{2}}(\bR^3)$, then $u \equiv 0$. Note that if $u \in \dot{H}^1_\sigma(\bR^3)$, then by the Sobolev's imbedding $u \in L_6(\bR^3)$. Therefore, there is a great discrepancy of the result in the mentioned uniqueness problem. Many other interesting results that extend this result can be found in \cite{Chae-Wolf, CJL-R, KTW, Seregin-1916, Seregin-2018} and the references therein. Notably, it is proved in \cite{Seregin-1916} that the same conclusion holds true for solutions which are assumed to be in $ L_6(\bR^3) \cap \textup{BMO}^{-1}(\bR^3)$. See also \cite{Chae-2018, Seregin-1916, Seregin-Wang} for some recent extensions of this result.

In this paper, we study the uniqueness problem and extend the mentioned known results to a completely new direction. In particular,  we investigate solutions of the equations \eqref{NS.eqn} that may decay to zero in different rates as $|x| \rightarrow \infty$ in different directions. We follow the spirit of the work  \cite{Krylov} to use mixed-norm Lebesgue spaces to measure those kinds of such functions, see also \cite{Dong-Kim, D-P}.  For two given numbers $q, r \in (1, \infty)$, the mixed-norm Lebesgue space $L_{q,r}(\bR^3)$  is the space that is equipped with the following norm
\[
\norm{f}_{L_{q,r}(\bR^3)} = \left [\int_{\bR} \left (\int_{\bR^2} |f(x_1, x_2, x_3)|^q dx_1 dx_2 \right)^{r/q} dx_3 \right]^{1/r},
\]
where $f : \mathbb{R}^3 \rightarrow \mathbb{R}$ is  a measurable function.   When $q=r$, we just simply write $L_{q}(\bR^3) = L_{q,q}(\bR^3)$.  In this paper, a function $f$ is said to be in $L_{q,\text{loc}}(\bR^3)$ if $f \in L_q(U)$ for every compact set $U \subset \bR^3$.  In a similar way, we also write $f \in H^1_{\text{loc}}(\bR^3)$ if $f \in H^1(U)$ for every compact set $U \subset \bR^3$.  Also, throughout the paper, a given function $u \in [H^1_{\text{loc}}(\bR^3)]^3$ is said to be a weak solution of \eqref{NS.eqn} if $\text{div}[u] =0$ in the distributional sense,  \eqref{pressure-formula} is well-defined in some suitable sense so that $p \in L_{1, \text{loc}}(\bR^3)$  and \eqref{test.eqn} holds.

The first  result of this paper is the following Liouville theorem  for solutions of the Navier-Stoke equations \eqref{NS.eqn} in mixed norm Lebesgue spaces.
\begin{theorem} \label{mixed-norm} Let $q, r \in [3, \infty)$ be two numbers satisfying 
\begin{equation} \label{q-r-relation}
\frac{2}{q} + \frac{1}{r} \geq \frac{2}{3}.
\end{equation}
Also, let  $u \in [H^1_{\textup{loc}}(\bR^3)]^3$ be a weak solution of \eqref{NS.eqn} and assume that $u \in [L_{q,r}(\bR^3)]^3$. Then $u \equiv 0$ in $\bR^3$. 
\end{theorem}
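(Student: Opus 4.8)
The plan is to run a Caccioppoli (local energy) argument with an isotropic cutoff whose scale is sent to infinity; the point of the hypothesis \eqref{q-r-relation} is that it makes the powers of $R$ produced by the mixed-norm H\"older inequality exactly subcritical. First I would fix a radial cutoff $\phi_R \in C_0^\infty(\bR^3)$ with $\phi_R \equiv 1$ on $B_R$, $\supp \phi_R \subset B_{2R}$, $\abs{\nabla \phi_R}\lesssim R^{-1}$ and $\abs{\nabla^2\phi_R}\lesssim R^{-2}$, and test the weak formulation \eqref{test.eqn} against $\phi_R^2 u$. Since $u\in[H^1_{\loc}(\bR^3)]^3$ embeds into $L^6_{\loc}$ and $p\in L_{1,\loc}(\bR^3)$, the field $\phi_R^2 u$ is an admissible test function after a routine density approximation, and all integrals below are finite. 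Using $\dv u=0$ and integrating by parts in the convection and pressure terms, I expect the standard identity
\[
\int_{\bR^3}\abs{\nabla u}^2\phi_R^2\,dx = \frac12\int_{\bR^3}\abs{u}^2\,\Delta(\phi_R^2)\,dx + \frac12\int_{\bR^3}\abs{u}^2\,(u\cdot\nabla\phi_R^2)\,dx + \int_{\bR^3}p\,(u\cdot\nabla\phi_R^2)\,dx,
\]
where every term on the right is supported in the annulus $A_R = B_{2R}\setminus B_R$.

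Two preliminary facts would then drive the estimates. First, the pressure lies in a global mixed-norm space: since $u_iu_j\in L_{q/2,r/2}(\bR^3)$ with $\norm{u_iu_j}_{L_{q/2,r/2}}\le \norm{u}_{L_{q,r}}^2$, and the Riesz compositions $\mathscr{R}_i\mathscr{R}_j$ are bounded on $L_{q/2,r/2}(\bR^3)$ because $q/2,r/2\in(1,\infty)$ (Benedek--Panzone mixed-norm Calder\'on--Zygmund theory), formula \eqref{pressure-formula} gives $\norm{p}_{L_{q/2,r/2}}\lesssim\norm{u}_{L_{q,r}}^2<\infty$. Second, a direct computation shows $\norm{\mathbf 1_{A_R}}_{L_{a,b}(\bR^3)}\lesssim R^{2/a+1/b}$, because each horizontal slice of $A_R$ has area $\lesssim R^2$ and the vertical extent of $A_R$ is $\lesssim R$. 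The constraint $q,r\ge3$ is exactly what makes $\abs{u}^3\in L_{q/3,r/3}$ and $\abs{u}^2\in L_{q/2,r/2}$ admissible, so the (generalized) mixed-norm H\"older inequality applies.

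Combining these, the cubic and pressure error terms obey
\[
\frac{1}{R}\int_{A_R}\abs{u}^3\,dx \lesssim \norm{u}_{L_{q,r}(A_R)}^3\, R^{\,2-3(2/q+1/r)},\qquad \frac{1}{R}\int_{A_R}\abs{p}\,\abs{u}\,dx \lesssim \norm{p}_{L_{q/2,r/2}(A_R)}\norm{u}_{L_{q,r}(A_R)}\, R^{\,2-3(2/q+1/r)},
\]
while the quadratic term satisfies $R^{-2}\int_{A_R}\abs{u}^2\,dx \lesssim \norm{u}_{L_{q,r}(A_R)}^2\, R^{\,1-2(2/q+1/r)}$. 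By \eqref{q-r-relation} the first two exponents are $\le 0$ and the last is $\le -1/3$, so each $R$-power stays bounded; since $u\in L_{q,r}(\bR^3)$ and $p\in L_{q/2,r/2}(\bR^3)$ with $r<\infty$, the tail norms $\norm{u}_{L_{q,r}(A_R)}$ and $\norm{p}_{L_{q/2,r/2}(A_R)}$ tend to $0$ as $R\to\infty$ (dominated convergence, as $\mathbf 1_{A_R}\to 0$ pointwise). Hence the whole right-hand side vanishes in the limit, while the left-hand side increases to $\int_{\bR^3}\abs{\nabla u}^2\,dx$ by monotone convergence. Therefore $\nabla u\equiv 0$, so $u$ is constant, and a nonzero constant cannot lie in $L_{q,r}(\bR^3)$; thus $u\equiv 0$.

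The main obstacle is the sharp bookkeeping in the penultimate step: the whole argument succeeds only because the exponents $2-3(2/q+1/r)$ produced by pairing $\norm{\mathbf 1_{A_R}}_{L_{a,b}}$ against the mixed-norm H\"older factors are nonpositive precisely under \eqref{q-r-relation}, so the condition is not an artifact but the exact threshold. The supporting technical point that must be secured beforehand is the mixed-norm pressure bound $\norm{p}_{L_{q/2,r/2}}\lesssim\norm{u}_{L_{q,r}}^2$, i.e.\ the boundedness of $\mathscr{R}_i\mathscr{R}_j$ on $L_{q/2,r/2}(\bR^3)$, which is the genuinely new mixed-norm estimate underlying the theorem.
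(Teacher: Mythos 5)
Your proposal is correct and follows essentially the same route as the paper: a Caccioppoli-type estimate obtained by testing against a cutoff at scale $R$, mixed-norm H\"older on the annular region producing exactly the exponents $1-2(2/q+1/r)$ and $2-3(2/q+1/r)$, and the global pressure bound $\norm{p}_{L_{q/2,r/2}(\bR^3)}\lesssim \norm{u}_{L_{q,r}(\bR^3)}^2$ from \eqref{pressure-formula}. The only substantive difference is in that supporting lemma: you invoke Benedek--Panzone mixed-norm Calder\'on--Zygmund theory for $\mathscr{R}_i\mathscr{R}_j$, whereas the paper derives the (weighted) mixed-norm bound from the weighted $L_p$ estimate via Rubio de Francia extrapolation, which it needs anyway for Theorems \ref{strip-thrm} and \ref{cyl-thrm}.
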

\begin{remark}  It is very interesting to observe from \eqref{q-r-relation} that either $q$ or $r$ can be taken to be sufficiently large. Consequently, the solution $u$ could decay to zero sufficiently slow as $|x| \rightarrow \infty$ and it may not be in neither  $L_{\frac{9}{2}}(\bR^3)$ nor $L_6(\bR^3)$.  Therefore, Theorem \ref{mixed-norm} covers the cases that are not covered in the known work such as \cite{Chae-2018, Chae-Wolf, CJL-R, KTW, Seregin-1916, Seregin-2018, Seregin-Wang}. It maybe also of great interest to find in Remark \ref{weighted-thrm-rm} below for a variant of Theorem \ref{mixed-norm} in weighted  Lebesgue spaces.  Note that Theorem \ref{mixed-norm} holds for $q = r = \frac{9}{2}$ and therefore it recovers the result established in  \cite[Theorem X.9.5, p. 729]{Galdi}. In the case that $p = r$ and $q \in [3, \frac{9}{2}]$, Theorem \ref{mixed-norm} also recovers the recent results obtained in \cite{CJL-R}.
\end{remark}
In some special cases where we have additional geometric assumptions on the supports of solutions, the ranges of the numbers $q$ and $r$ in Theorem \ref{mixed-norm} are improved significantly.  Our next two results are Liouville type theorems for  \eqref{NS.eqn} in this spirit. To introduce the results, we need to introduce the weighted mixed-norm Lebesgue spaces. For given numbers $q, r \in (1, \infty)$, and for two given weight functions
\[
\omega_1 : \bR^2 \rightarrow \bR \quad \text{and} \quad \omega_2: \mathbb{R} \rightarrow \mathbb{R},
\] 
the measurable function $f : \mathbb{R}^3 \rightarrow \mathbb{R}$ is said to be in the weighted mixed-norm Lebesgue space $L_{q,r}(\bR^3, \omega)$ if
\[
\norm{f}_{L_{q,r}(\bR^3, \omega)} = \left [\int_{\bR} \left (\int_{\bR^2} |f(x_1, x_2, x_3)|^q \omega_1(x_1, x_2) dx_1 dx_2 \right)^{r/q} \omega_2(x_3) dx_3 \right]^{1/r} <\infty,
\]
where $\omega(x) = \omega(x_1, x_2) \omega(x_3)$ for a.e. $x = (x_1, x_2, x_3) \in \bR^3$. Our second result is the following Liouville type theorem in weighted mixed-norm Lebesgue spaces for solutions whose supports are in strips in $\bR^3$. 
\begin{theorem} \label{strip-thrm} Let $q \in [3, 6]$, $r \in [3, \infty)$, $\alpha = \frac{6-q}{3}$ be fixed numbers, and  let 
$$ \omega_1(x') = (1 + |x'|)^{-\alpha}, \quad x' = (x_1, x_2) \in \bR^2$$
and $\omega_2 \equiv 1$ in $\bR$.  Also, let $u \in [H^1_{\textup{loc}}(\bR^3)]^3$ be a weak solution of \eqref{NS.eqn}.  Assume that $u\in [L_{q, r}(\bR^3, \omega)]^3$  and there exists $R_0 >0$ such that
\[
\textup{support}(u) \subset  \mathbb{R}^2 \times [-R_0, R_0].
\]
Then, $u \equiv 0$ in $\bR^3$.
\end{theorem}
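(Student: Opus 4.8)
The plan is to run a Caccioppoli/energy argument tailored to the slab geometry. Since $u$ is supported in $\bR^2\times[-R_0,R_0]$ and $\omega_2\equiv1$, I will cut off only in the horizontal variables. First I would note that the $L_{q,r}(\bR^3,\omega)$ assumption gives $u\in L^q_{\loc}$ (because $\omega_1$ is locally bounded below), so the interior regularity theory for \eqref{NS.eqn} promotes $u$ to a smooth function and makes $p$ (via \eqref{pressure-formula}) locally as regular as needed; this justifies testing the weak formulation with $u\phi$ where $\phi=\phi(x')$ is a smooth horizontal cutoff with $\phi\equiv1$ on $\{|x'|\le R\}$, $\supp\phi\subset\{|x'|\le 2R\}$, $|\nabla\phi|\lesssim R^{-1}$, $|\nabla^2\phi|\lesssim R^{-2}$. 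Using $\dv u=0$ and $(u\cdot\nabla)u\cdot u=\tfrac12\,u\cdot\nabla|u|^2$, this yields the Galdi-type identity
\[
\int_{\bR^3}|\nabla u|^2\phi\,dx=\frac12\int_{\bR^3}|u|^2\Delta\phi\,dx+\int_{\bR^3}\Big(\tfrac12|u|^2+p\Big)\,u\cdot\nabla\phi\,dx .
\]
The goal is to show the right-hand side tends to $0$ as $R\to\infty$.

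For each term I would integrate first in $x'$ over the annulus $A_R'=\{R\le|x'|\le 2R\}$ and then in $x_3$ over $[-R_0,R_0]$. In the $x'$ integral I apply the weighted Hölder inequality, pairing $|u|^k$ with $\omega_1^{k/q}$ and absorbing the excess into $\big(\int_{A_R'}\omega_1^{-k/(q-k)}\,dx'\big)^{(q-k)/q}$ (with an $L^\infty$ version at the endpoint $q=k$). The precise value $\alpha=(6-q)/3$ is exactly what forces these reciprocal-weight factors to scale like $R^{4/3}$ for $k=2$ and like $R^{1}$ for $k=3$, since $3\alpha=6-q$ makes the exponent $(2q-6+3\alpha)/q$ equal $1$. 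For the Laplacian term, $|\Delta\phi|\lesssim R^{-2}$ combined with the $R^{4/3}$ factor and a Hölder step in $x_3$ (legitimate as $r\ge 2$ and $x_3$ is confined to the bounded interval $[-R_0,R_0]$) gives a bound $\lesssim R^{-2/3}\norm{u}_{L_{q,r}(\bR^3,\omega)}^2\to0$. For the cubic term the gain $|\nabla\phi|\lesssim R^{-1}$ cancels the $R^{1}$ factor exactly, so the term is only $O(1)$; the decisive point is that keeping $u$ localized to $A_R'$ replaces the full norm by the tail $g_R(x_3):=\big(\int_{A_R'}|u|^q\omega_1\,dx'\big)^{1/q}$, which vanishes pointwise and is dominated by $g\in L^r(\bR)$, so the term tends to $0$ by dominated convergence.

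The main obstacle is the pressure term, as $p$ is nonlocal and cannot be restricted to the annulus. Here I would invoke the weighted mixed-norm Calderón--Zygmund/Riesz-transform estimate of the type established in this paper: because $\omega_1=(1+|x'|)^{-\alpha}\in A_{q/2}(\bR^2)$ (the power-weight condition $-2<-\alpha<q-2$ holds for $q\in[3,6]$, $\alpha\in[0,1]$) and $\omega_2\equiv1\in A_{r/2}(\bR)$, the compositions $\mathscr{R}_i\mathscr{R}_j$ are bounded on $L_{q/2,r/2}(\bR^3,\omega)$, so by \eqref{pressure-formula}
\[
\norm{p}_{L_{q/2,r/2}(\bR^3,\omega)}\lesssim\bignorm{|u|^2}_{L_{q/2,r/2}(\bR^3,\omega)}=\norm{u}_{L_{q,r}(\bR^3,\omega)}^2<\infty ,
\]
whence $h(x_3):=\norm{p(\cdot,x_3)}_{L^{q/2}(\bR^2,\omega_1)}\in L^{r/2}(\bR)$. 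Applying the same three-exponent weighted Hölder in $x'$, now pairing $|p|$ with $\omega_1^{2/q}$, $|u|$ with $\omega_1^{1/q}$, and $\omega_1^{-3/q}$, the reciprocal-weight factor is again $R^{1}$, so the pressure term is controlled (up to the balanced constant) by $\int_{-R_0}^{R_0}h(x_3)\,g_R(x_3)\,dx_3$. Although $p$ lives on all of $\bR^3$, the $u$-factor confines the $x_3$-integration to the slab; since $h\in L^{r/2}$ is fixed while $\norm{g_R}_{L^r}\to0$, a final Hölder step (using $\tfrac2r+\tfrac1r=\tfrac3r\le1$, i.e. $r\ge3$) drives this to $0$.

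Letting $R\to\infty$, monotone convergence on the left together with the three estimates on the right gives $\int_{\bR^3}|\nabla u|^2\,dx=0$, so $u$ is constant; being supported in the slab, $u\equiv0$. I expect the technical heart to be twofold: (i) verifying the Muckenhoupt membership $\omega_1\in A_{q/2}(\bR^2)$ and applying the weighted mixed-norm singular-integral bound to place $p$ in $L_{q/2,r/2}(\bR^3,\omega)$; and (ii) the critical bookkeeping in the cubic and pressure terms, where the estimates are scale-invariant and convergence is recovered \emph{only} through the vanishing of the annular tail $g_R$.
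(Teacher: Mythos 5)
Your proposal is correct and follows essentially the same route as the paper: a horizontal cutoff, the Galdi-type energy identity, insertion of powers of $\omega_1$ via weighted H\"older on the annulus (with the same exponent bookkeeping showing the quadratic term gains $R^{-2/3}$ while the cubic and pressure terms are scale-invariant and decay only through the annular tail), and the weighted mixed-norm Riesz-transform bound for $p$ justified by $\omega_1\in A_{q/2}(\bR^2)$. This matches the paper's proof, which organizes the same three terms as $J_1,J_2,J_3$ and invokes its Lemma \ref{pressure-lemma} and Lemma \ref{remark-weight} at exactly the points you identify.
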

\begin{remark} Observe that Theorem \ref{strip-thrm} holds with $p = r =6$. This result demonstrates a possibility that  Liouville theorem for the Navier-Stokes equations \eqref{NS.eqn} holds for $L_6(\bR^3)$-solutions. See also a similar result in \cite{Seregin-1916} in which the solutions are assumed to be in $ L_6(\bR^3) \cap \textup{BMO}^{-1}(\bR^3)$.
\end{remark}
The last result in this paper is a Liouville type theorem  in weighted mixed norm spaces for solutions whose supports are in cylinders in $\bR^3$. For this purpose, for each $R >0$, we denote the cylinder along the $x_3$-axis  in $\bR^3$ of radius $R$ by
\[
C_R = B_R' \times \mathbb{R},  
\]
where $B_R'$ is the ball in $\bR^2$ centered at the origin with radius $R$. Our result is the following Liouville type theorem for solutions in weighted mixed-norm Lebesgue spaces in $\bR^3$. 
\begin{theorem} \label{cyl-thrm}   Let $q, r \in [3, \infty)$, $\alpha \in [0,1)$ be fixed numbers,  and let $\omega_1 \equiv 1$ in $\bR^2$ and $\omega_2(x_3) =(1+ |x_3|)^{-\alpha}$ for $x_3 \in \bR$.  Also, let $u \in [H^1_{\textup{loc}}(\bR^3)]^3$ be a weak solution of \eqref{NS.eqn}. Assume that  $u\in [L_{q,r}(\bR^3, \omega)]^3$ and there exists $R_0 >0$ such that
\[
\textup{support}(u) \subset  C_{R_0}.
\]
Then, $u \equiv 0$ in $\bR^3$.
\end{theorem}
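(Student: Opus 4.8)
The plan is to run a Caccioppoli-type energy argument, testing the weak formulation \eqref{test.eqn} against $\phi = u\psi^2$, and to exploit that $u$ has bounded support in the horizontal variable $x'=(x_1,x_2)$, so that the cut-off need act only in the $x_3$-direction. Concretely, I would fix a smooth $\psi=\psi(x_3)$ with $\psi\equiv1$ for $\abs{x_3}\le R$, $\psi\equiv0$ for $\abs{x_3}\ge2R$, and $\abs{\psi'}\le C/R$, and insert $u\psi^2$ (legitimate after a routine approximation, since $u\psi^2\in H^1$ has compact support once the support hypothesis is used) into \eqref{test.eqn}. Using $\dv u=0$ to rewrite the convection and pressure terms yields, with all integrals over $\bR^3$, the identity
\[
\int\abs{\nabla u}^2\psi^2 = -2\int\psi\,\nabla u:(u\otimes\nabla\psi) + \int\abs{u}^2\psi\,(u\cdot\nabla\psi) + 2\int p\,\psi\,(u\cdot\nabla\psi).
\]
Call the three right-hand terms $I_1,I_2,I_3$. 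Since $\nabla\psi=\psi'(x_3)e_3$ is supported on the slab $A_R=\{R\le\abs{x_3}\le2R\}$ and each integrand carries a factor of $u$ (supported in $B_{R_0}'\times\bR$), every term lives on the fixed-width box $D_R=B_{R_0}'\times A_R$, on which the weight is essentially constant: $\omega(x)=\omega_2(x_3)=(1+\abs{x_3})^{-\alpha}\sim R^{-\alpha}$.

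For $I_1$ I would use Young's inequality to absorb $\tfrac12\int\abs{\nabla u}^2\psi^2$ into the left side, leaving $CR^{-2}\int_{D_R}\abs{u}^2$; for $I_2$ the bound is $CR^{-1}\int_{D_R}\abs{u}^3$. Writing $g(x_3)=\big(\int_{B_{R_0}'}\abs{u}^q\,dx'\big)^{1/q}$, Hölder in $x'$ over the fixed ball $B_{R_0}'$ (using $q\ge3\ge2$) bounds $\int_{B_{R_0}'}\abs{u}^2\,dx'$ and $\int_{B_{R_0}'}\abs{u}^3\,dx'$ by $C(R_0)g^2$ and $C(R_0)g^3$; then Hölder in $x_3$ against $\omega_2\,dx_3$, together with $\abs{A_R}\sim R$ and $\omega_2\sim R^{-\alpha}$ on $A_R$, reduces $I_1,I_2$ to a negative power of $R$ times the convergent tail $\int_{A_R}g^r\omega_2\,dx_3\to0$. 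The exponent bookkeeping gives powers $\tfrac{-r-2+2\alpha}{r}$ for $I_1$ and $\tfrac{3(\alpha-1)}{r}$ for $I_2$, both negative when $\alpha<1$ and $q,r\ge3$, so $I_1,I_2\to0$.

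The main obstacle is the pressure term $I_3\le CR^{-1}\int_{D_R}\abs{p}\abs{u}$, because the pressure defined by \eqref{pressure-formula} is nonlocal and need not be supported in the cylinder. Here I would invoke the global weighted mixed-norm bound for the double Riesz transform established earlier in the paper: since $\omega_1\equiv1\in A_1(\bR^2)$ and $\omega_2\in A_1(\bR)$ for $\alpha\in[0,1)$, one has $\norm{p}_{L_{q/2,r/2}(\bR^3,\omega)}\le C\norm{u\otimes u}_{L_{q/2,r/2}(\bR^3,\omega)}=C\norm{u}_{L_{q,r}(\bR^3,\omega)}^2<\infty$. Then I would write $\int_{D_R}\abs{p}\abs{u}\le CR^{\alpha}\int_{D_R}\abs{p}\abs{u}\,\omega\,dx$ (using $\omega^{-1}\sim R^{\alpha}$ on $D_R$) and apply the weighted mixed-norm Hölder inequality with conjugate exponents $(q/2)'=q/(q-2)\le q$ and $(r/2)'=r/(r-2)\le r$ (valid since $q,r\ge3$). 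Bounding the resulting $L_{(q/2)',(r/2)'}(D_R,\omega)$-norm of $u$ by $\norm{u}_{L_{q,r}(\omega)}$ times a power of the finite measure $\int_{D_R}\omega\,dx\sim R^{1-\alpha}$, and collecting exponents, yields a bound of the form $C\,R^{-3(1-\alpha)/r}\big(\int_{A_R}g^r\omega_2\,dx_3\big)^{1/r}$, which tends to $0$ as $R\to\infty$ precisely because $\alpha<1$.

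Combining the three estimates gives $\int_{\abs{x_3}\le\rho}\abs{\nabla u}^2\,dx\le\int_{\bR^3}\abs{\nabla u}^2\psi^2\,dx\to0$ for each fixed $\rho$ upon sending $R\to\infty$, hence $\nabla u\equiv0$, so $u$ is constant on the connected set $\bR^3$; the hypothesis $\textup{support}(u)\subset C_{R_0}$ then makes that constant zero, i.e. $u\equiv0$. The only genuinely delicate ingredient is the pressure estimate $I_3$, which is exactly where the weighted mixed-norm (Muckenhoupt/extrapolation) machinery from the earlier part of the paper is needed, and where the hypothesis $\alpha\in[0,1)$ is sharp for producing the decaying power of $R$.
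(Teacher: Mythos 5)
Your proposal is correct and follows essentially the same route as the paper: a Caccioppoli-type estimate with a cut-off in $x_3$ only, exploiting that all error terms live on the fixed-radius slab $B_{R_0}'\times\{R\le|x_3|\le 2R\}$ where $\omega_2\sim R^{-\alpha}$, with the pressure term handled by the weighted mixed-norm Riesz-transform bound of Lemma \ref{pressure-lemma} (which is exactly where $\alpha<1$, i.e.\ $\omega_2\in A_{r/2}(\bR)$, enters). The only differences are cosmetic (testing against $u\psi^2$ with Young's inequality rather than $u\phi_R$ with a second integration by parts, and concluding from the support rather than from weighted integrability of the constant), and your exponent bookkeeping agrees with the paper's.
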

\begin{remark} Theorem \ref{cyl-thrm} is interesting because it allows $u$ to decay to zero in $x_3$ at very slow rate. More precisely, let us define
\[
\psi(x_3) = \left(\int_{B'_{R_0}} |u(x', x_3)|^{q} dx' \right)^{\frac{1}{q}}.
\]
Assume that there exist $N_0 >0$ and $\beta >0$ such that
\[
|\psi(x_3)| \leq \frac{N_0}{(1 + |x_3|)^{\beta}}, \quad \forall \ x_3 \in \bR.
\]
Then, with the choice of $\alpha \in (0,1)$ and sufficiently close to $1$ so that $\beta r+\alpha >1$, we see that $\psi \in L_r(\bR, \omega_2)$ and therefore $u \in L_{q,r}(\bR^3, \omega)$. As $\beta$ can be sufficiently small, $u \not\in L_{\frac{9}{2}}(\bR^3)$ and also $u \not\in L_6(\bR^3)$ and therefore the results in the available work such as \cite{Chae-Wolf, CJL-R, KTW, Seregin-1916, Seregin-2018} are not applicable.
\end{remark}
The remaining part of the paper is to prove Theorems \ref{mixed-norm}, \ref{strip-thrm},  \ref{cyl-thrm}. Our approach is based on the combination of the approach used in \cite[Theorem X.9.5, p. 729]{Galdi} together with some new results on mixed-norm estimates and weighted mixed-norm estimates (see Lemma \ref{pressure-lemma} below).  Though, the proofs of these theorems are semilar, but details calculations are needed to be adjusted differently. We therefore provide the details of the proof of Theorem \ref{mixed-norm}. Meanwhile, we also give essential steps in the proofs of Theorem \ref{strip-thrm} and \ref{cyl-thrm}.  In Section \ref{weights}, we recall the definitions of Muckenhoupt classes of weights, and we introduce a lemma on weighted mixed norm estimates for the pressure $p$ in \eqref{NS.eqn}. The proof of Theorem \ref{mixed-norm} is given in Section \ref{mixed-norm-sec}. Section \ref{strip-ses} is about the proof of Theorem \ref{strip-thrm}, meanwhile the last section, Section \ref{cyl-section},  is to prove Theorem \ref{cyl-thrm}.

\section{Weights and weighted mixed-norm estimates} \label{weights}
This section recalls some definitions and deduces some weighted mixed-norm estimates for the pressure.  For each $q \in [1, \infty)$, a non-negative measurable function $\omega: \mathbb{R}^n \rightarrow \mathbb{R}$ is said to be in the Muckenhoupt  $A_q(\bR^n)$-class of weights if
\[
[\omega]_{A_q} := \displaystyle{\sup_{R>0 , x_0 \in \bR^n} \left(\frac{1}{|B_R(x_0)|} \int_{B_R(x_0)} \omega(x) dx \right) \left(\frac{1}{B_R(x_0)} \int_{B_R(x_0)} \omega(x)^{-\frac{1}{q-1}} dx  \right)^{q-1}} < \infty  
\]
for  $q \in (1, \infty)$, and
\[
[\omega]_{A_1} := \displaystyle{\sup_{R>0 , x_0 \in \bR^n} \left(\frac{1}{|B_R(x_0)|} \int_{B_R(x_0)} \omega(x) dx \right)  \|\frac{1}{\omega}\|_{L_\infty(B_R(x_0)}} < \infty,
\]
where $B_R(x_0)$ denotes the ball in $\bR^3$ of radius $R$ and centered at $x_0 \in \bR^3$. We  also recall that for two given numbers $q, r \in (1, \infty)$, and for two weight functions
\[
\omega_1 : \bR^2 \rightarrow \bR \quad \text{and} \quad \omega_2: \mathbb{R} \rightarrow \mathbb{R},
\] 
the measurable function $f : \mathbb{R}^3 \rightarrow \mathbb{R}$ is said to be in the weighted mixed-norm Lebesgue space $L_{q,r}(\bR^3, \omega)$ if
\[
\norm{f}_{L_{q,r}(\bR^3, \omega)} = \left [\int_{\bR} \left (\int_{\bR^2} |f(x_1, x_2, x_3)|^q \omega_1(x_1, x_2) dx_1 dx_2 \right)^{r/q} \omega_2(x_3) dx_3 \right]^{1/r} <\infty,
\]
where $\omega(x) = \omega_1(x_1, x_2) \omega_2(x_3)$ for all $x = (x_1, x_2, x_3) \in \bR^3$.  In this paper, at various contexts, with a given weight function $\omega: \bR^3 \rightarrow \bR$  we also write $L_{q}(\bR^3,\omega)$ for the usual weighted Lebesgue space whose norm is defined by
\[
\norm{f}_{L_q(\bR^3, \omega)} = \left( \int_{\bR^3} |f(x)|^q \omega(x) dx \right)^{\frac{1}{q}}.
\]

We introduce the following lemma on weighted mixed norm estimates for the pressure $p$ of the equations \eqref{NS.eqn}. This lemma is an important ingredient in this paper.
\begin{lemma} \label{pressure-lemma} Let $q, r \in (2, \infty)$ and $M_0 \geq 1$. Assume that $u \in [L_{q, r}(\bR^3, \omega)]^3$ with $\omega(x) =\omega_1(x') \omega_2(x_3)$ for all a.e. $x = (x', x_3) \in \bR^2 \times \bR$, and for $\omega_1 \in A_{\frac{q}{2}}(\bR^2)$, $\omega_2 \in A_{\frac{r}{2}}(\bR)$ with 
\[
[\omega_1]_{A_{\frac{q}{2}}(\bR^2)} \leq M_0, \quad [\omega_2]_{A_{\frac{r}{2}}(\bR)} \leq M_0.
\]
Then, there exists $N = N(q,r, M_0) >0$ such that
\[
\norm{p}_{L_{\frac{q}{2}, \frac{r}{2}}(\bR^3, \omega)} \leq N \norm{u}_{L_{q, r}(\bR^3, \omega)}^2,
\]
where $p$ is defined as in \eqref{pressure-formula}.
\end{lemma}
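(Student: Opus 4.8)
The plan is to reduce the desired estimate for $p$ to the weighted mixed-norm boundedness of the second order Riesz transforms $\mathscr{R}_i\mathscr{R}_j$, and then to obtain the latter from the classical scalar weighted estimates by extrapolation. Throughout, set $s = q/2$ and $t = r/2$, both of which lie in $(1,\infty)$ since $q,r > 2$, and note that the hypotheses read $\omega_1 \in A_s(\bR^2)$ and $\omega_2 \in A_t(\bR)$.

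First I would handle the inputs to the Riesz transforms. By the formula \eqref{pressure-formula} it suffices to control each term $\mathscr{R}_i\mathscr{R}_j(u_iu_j)$ in $L_{s,t}(\bR^3,\omega)$, so I begin by estimating the products $u_iu_j$. The claim is that $\|u_iu_j\|_{L_{s,t}(\bR^3,\omega)} \le \|u_i\|_{L_{q,r}(\bR^3,\omega)}\|u_j\|_{L_{q,r}(\bR^3,\omega)}$, which follows from two applications of the Cauchy--Schwarz inequality: in the inner integral over $x' \in \bR^2$ with exponents $(2,2)$ against the weight $\omega_1$, using $|u_iu_j|^{s} = |u_i|^{q/2}|u_j|^{q/2}$, and then in the outer integral over $x_3$ with exponents $(2,2)$ against $\omega_2$, after raising to the power $t/s = r/q$. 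Summing over $i,j$ gives $\sum_{i,j}\|u_iu_j\|_{L_{s,t}(\omega)} \lesssim \|u\|_{L_{q,r}(\omega)}^2$, so the lemma reduces to proving that each $T := \mathscr{R}_i\mathscr{R}_j$ is bounded on $L_{s,t}(\bR^3,\omega)$ with norm depending only on $s,t,M_0$.

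Second, I would establish this boundedness. Each $T$ is a convolution Calderón--Zygmund operator on $\bR^3$, so by the classical weighted theory it is bounded on $L_\rho(\bR^3,W)$ for every $\rho\in(1,\infty)$ and every $W\in A_\rho(\bR^3)$, with norm controlled by $\rho$ and $[W]_{A_\rho}$. The essential difficulty is to upgrade these single-exponent, full-space-weight bounds to the mixed-norm bound with the tensor weight $\omega = \omega_1\otimes\omega_2$, in which the two exponents $s,t$ and the two factor weights are \emph{independent}; this is exactly the regime where the tensor weight need not belong to any single $A_\rho(\bR^3)$ and where extrapolation is forced upon us. I would apply the Rubio de Francia extrapolation theorem in the setting of Banach function spaces: the space $X = L_{s,t}(\bR^3,\omega)$ is a Banach function space (as $s,t>1$), and its associate space is $X' = L_{s',t'}(\bR^3,\,\omega_1^{1-s'}\otimes\omega_2^{1-t'})$, where by $A_p$-duality $\omega_1^{1-s'}\in A_{s'}(\bR^2)$ and $\omega_2^{1-t'}\in A_{t'}(\bR)$ with constants controlled by $M_0$. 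Granting that the Hardy--Littlewood maximal operator $M$ is bounded on both $X$ and $X'$, the extrapolation theorem transfers the scalar $A_{\rho_0}$-bound for $T$ (say at $\rho_0 = 2$) to boundedness of $T$ on $X$, with constant depending only on $s,t,M_0$; combined with the first step this yields the stated inequality.

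The main obstacle is precisely the verification that $M$ is bounded on the weighted mixed-norm space $X$ (and on $X'$). I expect to dispatch it by dominating the isotropic maximal function on $\bR^3$ by the composition $M^{(1)}M^{(2)}$ of the partial maximal operators in the $x'$- and $x_3$-variables, and then invoking the weighted (Fefferman--Stein, vector-valued) maximal inequalities: $M^{(2)}$ is bounded on $L_t(\bR,\omega_2)$ since $\omega_2\in A_t(\bR)$, and $M^{(1)}$ is bounded on $L_s(\bR^2,\omega_1)$, vector-valued in the $x_3$-variable, since $\omega_1\in A_s(\bR^2)$, all with constants depending only on $s,t,M_0$; the identical argument applies to $X'$ using the dual exponents and dual weights. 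Alternatively, this mixed-norm mapping property of $T$ could be quoted directly as a known weighted mixed-norm estimate for singular integrals, but the content is the extrapolation step above. Once the boundedness of $M$ on $X$ and $X'$ is secured, the remaining steps are routine.
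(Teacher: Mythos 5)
Your proposal is correct and reaches the paper's conclusion by the same high-level strategy---write $p=\sum_{i,j}\mathscr{R}_i\mathscr{R}_j(u_iu_j)$, use Cauchy--Schwarz in each variable to get $\|u_iu_j\|_{L_{q/2,r/2}(\bR^3,\omega)}\le \|u\|_{L_{q,r}(\bR^3,\omega)}^2$, and upgrade the classical weighted bound $\|\mathscr{R}_i\mathscr{R}_jf\|_{L_l(\bR^3,\mu)}\le N\|f\|_{L_l(\bR^3,\mu)}$ to the mixed-norm tensor-weight setting via Rubio de Francia extrapolation---but the extrapolation step is implemented genuinely differently. The paper fixes the inner weight $\omega_1\in A_{q/2}(\bR^2)$ and forms the one-dimensional pair $\phi(x_3)=\|\mathscr{R}_i\mathscr{R}_jf(\cdot,x_3)\|_{L_{q/2}(\bR^2,\omega_1)}$ and $\psi(x_3)=\|f(\cdot,x_3)\|_{L_{q/2}(\bR^2,\omega_1)}$; since $\omega_1\otimes\tilde{\omega}\in A_{q/2}(\bR^3)$ for every $\tilde{\omega}\in A_{q/2}(\bR)$, the scalar bound gives $\|\phi\|_{L_{q/2}(\bR,\tilde{\omega})}\le N\|\psi\|_{L_{q/2}(\bR,\tilde{\omega})}$ for all such $\tilde{\omega}$, and the pairs-of-functions form of extrapolation (as in Dong--Kim) then changes only the outer exponent from $q/2$ to $r/2$, with no auxiliary work. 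Your route instead invokes the Banach-function-space form of extrapolation on $X=L_{q/2,r/2}(\bR^3,\omega)$, which carries the extra obligation of proving that the Hardy--Littlewood maximal operator is bounded on $X$ and on its associate $X'$; your plan for this (dominate $M$ by a composition of partial maximal operators and apply the weighted scalar and Fefferman--Stein inequalities) is sound and all the needed ingredients are true, though note that it is the partial maximal operator acting in the \emph{outer} variable $x_3$ that requires the vector-valued, $L_{q/2}(\bR^2,\omega_1)$-valued, inequality---the one acting in $x'$ is handled slicewise by the scalar weighted bound---so the roles you assign to $M^{(1)}$ and $M^{(2)}$ should be swapped. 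Both arguments give the same dependence $N=N(q,r,M_0)$; the paper's is shorter because the maximal-function verification is avoided entirely, while yours is more robust in that it applies verbatim to any Calder\'on--Zygmund operator and any target space on which $M$ and its dual adjoint are bounded.
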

\begin{proof} We use the idea developed in \cite{Dong-Kim} which makes use of the extrapolation theory due to Rubio de Francia (see \cite{David} and also \cite{Dong-Kim}). Recall that
\[
p =\sum_{i, j =1}^3 \mathscr{R}_i \mathscr{R}_j (u_i u_j),
\]
where $\mathscr{R}_i $ denotes the $i$-th Riesz transform. Recall also for each $l \in (1, \infty)$, $M_0 \geq 1$, and each $\mu \in A_l(\bR^3)$ with $[\mu]_{A_l(\bR^3)} \leq M_0$ the map $\mathscr{R}_i \mathscr{R}_j$ is bounded from $L_l(\bR^3, \mu) \rightarrow L_l(\bR^3, \mu)$ and (see \cite{RF}, for example) 
\begin{equation} \label{Riesz-1}
\norm{\mathscr{R}_i \mathscr{R}_j (f)}_{L_l(\bR^3, \mu)} \leq N(l, M_0) \norm{f}_{L_l(\bR^3, \mu)}.
\end{equation}
It is sufficient to show that the estimate \eqref{Riesz-1} can be extended to the weighted mixed norm. In particular, we claim that for every $q_1, q_2 \in (1, \infty)$
\begin{equation} \label{Riesz-2}
\norm{\mathscr{R}_i \mathscr{R}_j (f)}_{L_{q_1, q_2} (\bR^3, \mu)} \leq N(q_1, q_2, M_0) \norm{f}_{L_{q_1,q_2}(\bR^3, \mu)}.
\end{equation}
for $\mu(x) = \mu_1(x') \mu_2(x)$ with $x =(x',x_3) \in \bR^2\times \bR$ and
\begin{equation} \label{mu-1-2}
[\mu_1]_{A_{q_1}(\bR^2)} \leq M_0, \quad [\mu_2]_{A_{q_2}(\bR)} \leq M_0.
\end{equation}
To this end, for a fixed $\mu_1 \in A_{q_1}(\bR^2)$ as in \eqref{mu-1-2}, let us define
\[
\begin{split}
&  \phi(x_3) = \left(\int_{\bR^2} |\mathscr{R}_i \mathscr{R}_j (f)(x', x_3)|^{q_1} \mu_1(x') dx' \right)^{\frac{1}{q_1}} \quad \text{and} \\
& \psi(x_3) = \left(\int_{\bR^2} |f(x', x_3)|^{q_1} \mu_1(x') dx'\right)^{\frac{1}{q_1}}, \quad \text{for a.e.} \ x_3 \in \bR.
\end{split}
\]
Let $\tilde{\omega} \in A_{q_1}(\bR)$ be any weight and we denote
\[
\tilde{\mu}(x) = \mu_1(x') \tilde{\omega}(x_3), \quad x =(x',x_3) \in \bR^2\times \bR.
\]
Observe that $\tilde{\mu} \in A_{q_1}(\bR^3)$. Therefore, it follows from \eqref{Riesz-1} that
\[
\norm{\phi}_{L_{q_1}(\bR, \tilde{\omega})} = \norm{\mathscr{R}_i \mathscr{R}_j (f)}_{L_{q_1}(\bR^3, \tilde{\mu})} \leq N \norm{f}_{L_{q_1}(\bR^3, \tilde{\mu})} = N \norm{\psi}_{L_{q_1}(\bR, \tilde{\omega})}.
\]
Then, by the extrapolation theorem (see \cite[Theorem 2.5]{Dong-Kim}, for intance), we infer that
\[
\norm{\phi}_{L_{q_2}(\bR, \tilde{\omega})} \leq N \norm{\psi}_{L_{q_2}(\bR, \tilde{\omega})}
\]
for every $q_2 \in (1, \infty)$ and for $\tilde{\omega} \in A_{q_2}(\bR)$. This implies \eqref{Riesz-2} and  the proof of the lemma is completed. 
\end{proof}
\begin{remark} Another possible way of proving Lemma \ref{pressure-lemma} is to consider $u$ as is a weak solution of the Stokes system
\[
\left\{
\begin{array}{cccl}
-\Delta u + \nabla p & = & -\text{div} (u\otimes u) & \quad \text{in} \quad \bR^3 \\
\text{div}(u) & = & 0 & \quad \text{in} \quad \bR^3.
\end{array} \right.
\]
Then, it is possible to modify the mixed-norm regularity results obtained in \cite{Dong-Kim, D-K-18, D-P, Krylov} for the Stokes system of equations to deduce that
\[
\norm{\nabla \otimes u}_{L_{\frac{p}{2}, \frac{r}{2}}(\bR^3, \omega)} +  \norm{p}_{L_{\frac{p}{2}, \frac{r}{2}}(\bR^3, \omega)} \leq N(p, r, M_0)\norm{|u|^2}_{_{L_{\frac{p}{2}, \frac{r}{2}}(\bR^3, \omega)}} = N \norm{u}_{_{L_{p, r}(\bR^3, \omega)}}^2.
\]
We plan to come back to this issue in our future work.
\end{remark}
To conclude the section, we include the following simple but important lemma which enables us to use the weighted mixed-norm estimates in Lemma \ref{pressure-lemma} with the weights defined as in Theorem \ref{strip-thrm} and Theorem \ref{cyl-thrm}.
\begin{lemma} \label{remark-weight} Let $q \in (1, \infty)$,   $\alpha \in [0,n)$, and $\omega(x) = (1+|x|)^{-\alpha}$ for all $x \in \bR^n$. Then, $\omega \in A_q(\bR^n)$.
\end{lemma}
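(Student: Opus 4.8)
The plan is to verify the Muckenhoupt condition directly from its definition, distinguishing balls that are small compared with their distance to the origin from those that are large. Write $\omega(x) = (1+|x|)^{-\alpha}$ and assume $\alpha > 0$ (the case $\alpha = 0$ is trivial, since then $\omega \equiv 1 \in A_q(\bR^n)$). Set $\gamma = \alpha/(q-1)$, so that the dual weight is $\omega(x)^{-1/(q-1)} = (1+|x|)^{\gamma}$. For a ball $B = B_R(x_0)$ put
\[
\mathcal{A}(B) = \left(\frac{1}{|B|}\int_B \omega\, dx \right)\left(\frac{1}{|B|}\int_B \omega^{-\frac{1}{q-1}}\, dx \right)^{q-1};
\]
the goal is to bound $\mathcal{A}(B)$ uniformly in $R > 0$ and $x_0 \in \bR^n$.

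First I would treat the case $R \le \tfrac12(1+|x_0|)$. Here a short triangle-inequality computation gives $\tfrac12(1+|x_0|) \le 1+|x| \le 2(1+|x_0|)$ for every $x \in B$, so both $\omega$ and $\omega^{-1/(q-1)}$ are comparable, with absolute constants, to their values at the center $x_0$. Consequently the first average is comparable to $(1+|x_0|)^{-\alpha}$ and the second to $(1+|x_0|)^{\gamma}$, whence $\mathcal{A}(B)$ is comparable to $(1+|x_0|)^{-\alpha}\,(1+|x_0|)^{\gamma(q-1)} = (1+|x_0|)^{-\alpha+\alpha} = 1$, a uniform bound.

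The substantive case is $R > \tfrac12(1+|x_0|)$, where $B$ is large relative to its distance from the origin. Then $|x_0| < 2R$, so $B \subset B_{3R}(0)$, and I would estimate each average by enlarging the domain of integration to $B_{3R}(0)$ and passing to polar coordinates. The key point is that, because $0 \le \alpha < n$, one has $(1+\rho)^{-\alpha} \le \rho^{-\alpha}$, so the radial integral $\int_0^{3R}(1+\rho)^{-\alpha}\rho^{n-1}\,d\rho$ converges and is $\lesssim R^{n-\alpha}$, which yields $\frac{1}{|B|}\int_B \omega\,dx \lesssim R^{-\alpha}$; on the other hand, using $1 + |x| \lesssim R$ throughout $B_{3R}(0)$ (valid since $R > 1/2$ in this regime) gives $\frac{1}{|B|}\int_B \omega^{-1/(q-1)}\,dx \lesssim R^{\gamma}$. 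Multiplying, $\mathcal{A}(B) \lesssim R^{-\alpha}\,(R^{\gamma})^{q-1} = R^{-\alpha+\gamma(q-1)} = R^{0} = 1$, again a uniform bound.

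The main obstacle is precisely this large-ball regime: the crude pointwise bound $\omega \le 1$ is too lossy, as it would leave an unbounded factor $R^{\alpha}$ coming from the dual weight. The decisive feature is that the genuine decay $\frac{1}{|B|}\int_B \omega\,dx \sim R^{-\alpha}$ — which is exactly where the hypothesis $\alpha < n$ enters, guaranteeing the radial integral scales like $R^{n-\alpha}$ rather than diverging — cancels the growth $R^{\gamma(q-1)} = R^{\alpha}$ of the dual average. Once both regimes are controlled, taking the supremum over all balls gives $[\omega]_{A_q} < \infty$, that is, $\omega \in A_q(\bR^n)$. Alternatively, one could invoke the classical characterization that $|x|^{\beta} \in A_q(\bR^n)$ if and only if $-n < \beta < n(q-1)$, together with a pointwise comparison of $(1+|x|)^{-\alpha}$ with $|x|^{-\alpha}$, but the direct verification sketched above is self-contained.
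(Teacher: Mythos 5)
Your proposal is correct and follows essentially the same strategy as the paper: split into the regime where the ball is small relative to its distance from the origin (where $1+|x|$ is essentially constant on the ball, so the two averages cancel exactly) and the regime where the ball is comparable to an origin-centered ball (where polar coordinates and $\alpha<n$ give the decisive bound $R^{-\alpha}$ on the average of $\omega$). The paper phrases the dichotomy as $|x_0|\le 3R$ versus $|x_0|>3R$ and routes the first case through a separate estimate for origin-centered balls, but the content is the same as your two cases.
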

\begin{proof} The proof is elementary and we sketch a few steps for completeness. First of all,  we claim that
\begin{equation} \label{origin-balls}
M_0 := \sup_{R >0} \left(\frac{1}{|B_R|}\int_{B_R} \omega(x) dx \right) \left(\frac{1}{|B_R|}\int_{B_R}\omega(x)^{-\frac{1}{q-1}}\right)^{q-1} <\infty,
\end{equation}
where $B_R$ denotes the ball of radius $R$ in $\bR^n$ centered at the origin.  As $\omega(x)$ and $\omega(x)^{-\frac{1}{q-1}}$ are bounded in compact sets, to prove \eqref{pressure-lemma}, we only need to consider the case that $R$ is large, saying $R>1000$. In this case, observe that
\[
\begin{split}
\frac{1}{|B_R|}\int_{B_R} \omega(x) dx  & = R^{-n} \int_{0}^R (1+r)^{-\alpha} r^{n-1} dr \\
& = R^{-n} \left[  \int_{1000}^R (1+r)^{-\alpha} r^{n-1} dr +  \int_{0}^{1000} (1+r)^{-\alpha} r^{n-1} dr \right] \\
& \leq N (n, \alpha) R^{-n }\left[\int_{1000}^R (1+r)^{n-\alpha-1}  dr + 1 \right] \\
&\leq N(n, \alpha) (1+R)^{-\alpha}.
\end{split}
\]
Similarly,
\[
\frac{1}{|B_R|} \int_{B_R} \omega(x)^{-\frac{1}{q-1}} dx = R^{-n} \int_0^R (1+r)^{\frac{\alpha}{q-1}} r^{n-1}dr \leq N(n, \alpha) (1+R)^{\frac{\alpha}{q-1}}.
\]
Consequently, 
\[
\begin{split}
\sup_{R>1000} \left(\frac{1}{|B_R|}\int_{B_R} \omega(x) dx \right) \left(\frac{1}{|B_R|}\int_{B_R}\omega(x)^{-\frac{1}{q-1}}\right)^{q-1}  \leq N(n, \alpha) < \infty
\end{split}
\]
and \eqref{origin-balls} follows.

Now with \eqref{origin-balls} in hand, we prove 
\[
\sup_{R >0, x_0 \in \bR^n} \left(\frac{1}{|B_R(x_0)|}\int_{B_R(x_0)} \omega(x) dx \right) \left(\frac{1}{|B_R(x_0)|}\int_{B_R(x_0)}\omega(x)^{-\frac{1}{q-1}}\right)^{q-1} <\infty
\]
by considering two cases.\\
\noindent {\bf Case 1}. We assume that $|x_0| \leq 3R$. In this case, observe that $B_{R}(x_0) \subset B_{4R}$. Then, by \eqref{origin-balls}, we see that 
\[
\begin{split}
& \left(\frac{1}{|B_R(x_0)|}\int_{B_R(x_0)} \omega(x) dx \right) \left(\frac{1}{|B_R(x_0)|}\int_{B_R(x_0)}\omega(x)^{-\frac{1}{q-1}}\right)^{q-1}\\
& \leq N(n, q) \left(\frac{1}{|B_{4R}|}\int_{B_{4R}} \omega(x) dx \right) \left(\frac{1}{|B_{4R}|}\int_{B_{4R}}\omega(x)^{-\frac{1}{q-1}}\right)^{q-1} \\
& \leq N(n, q) M_0.
\end{split}
\]
\noindent {\bf Case 3}. We assume that $|x_0| > 3R$. In this case, we see that 
\[
\begin{split} 
1+ |x| & \geq 1+ |x_0| - |x-x_0|  \geq 1+ |x_0| - R \\
& \geq \frac{1 + |x_0|+R}{2}, \quad \forall \ x \in B_{R}(x_0). 
\end{split}
\]
Therefore,
\[
\frac{1}{|B_{R}(x_0)|}\int_{B_{R}(x_0)} \omega(x) dx \leq N(n, \alpha)  [1+ |x_0|  + R]^{-\alpha}.
\]
On the other hand, we also have
\[
1+ |x| \leq 1 + |x_0| + |x-x_0| \leq 1+ |x_0| +R, \quad \forall \ x \in B_R(x_0).
\]
Then,
\[
\left(\frac{1}{|B_R(x_0)|}\int_{B_R(x_0)}\omega(x)^{-\frac{1}{q-1}}\right)^{q-1}\leq N(n)[1+|x_0| +R]^{\alpha}.
\]
Hence,
\[
\left(\frac{1}{|B_R(x_0)|}\int_{B_R(x_0)} \omega(x) dx \right) \left(\frac{1}{|B_R(x_0)|}\int_{B_R(x_0)}\omega(x)^{-\frac{1}{q-1}}\right)^{q-1} \leq N(n, \alpha)
\]
and this completes the proof.
\end{proof}
\noindent
\section{Liouville theorems for solutions in mixed norm Lebesgue spaces in $\bR^3$}  \label{mixed-norm-sec}
This section proves Theorem \ref{mixed-norm}. We follow the approach used in the proof of \cite[Theorem X.5.1, p. 729]{Galdi} and combine it with our new ingredient on weighted mixed-norm estimates in Lemma \ref{pressure-lemma}. 
\begin{proof}[Proof of Theorem \ref{mixed-norm}] For each $R>0$, we denote the cube in $\bR^3$ centered at the origin with radius $R$ by
\[
Q_R = [-R, R]^3.
\]
Let $\phi \in C^\infty_0(\bR)$ be a standard cut-off function with $0 \leq \phi \leq 1$ and
\[
\phi =1 \quad \text{on} \quad [-1/2, 1/2] \quad \text{and} \quad \phi =0 \quad \text{on} \quad \bR \setminus [-1, 1].
\] 
For each $R>0$,  let 
$$\phi_R(x) = \phi(\frac{x_1}{R}) \phi(\frac{x_2}{R})\phi(\frac{x_3}{R}), \quad x=(x_1, x_2, x_3) \in \bR^3.$$
Then, we see that
\[
\phi_R(x) = 1 \quad \text{on} \quad Q_{R/2}, \quad \phi =0 \quad \text{on} \quad \bR^3\setminus Q_R.
\]
Moreover, there is a universal constant $N_0$ independent on $R$ such that
\begin{equation} \label{grad-test-fn}
|\nabla \phi_R| \leq \frac{N_0}{R} \quad \text{and} \quad |\nabla^2 \phi_R|  \leq \frac{N_0}{R^2} \quad \text{on} \quad \bR^3.
\end{equation}
Note that it follows from Lemma \ref{pressure-lemma} that $p \in L_{\frac{p}{2}, \frac{r}{2}}(\bR^3)$. Observe also that as $u \in [H^1_{\text{loc}}(\bR^3)]^3$, we see that $u \in [L_{6,\text{loc}}(\bR^3)]^3$ by the Sobolev's imbedding (in fact, by the regularity theory (see \cite[chapter 3]{Seregin-book}), $u \in C^\infty(\bR^3)$). Therefore, we can use  $u\phi_R$ as a test function in \eqref{test.eqn} for the system \eqref{NS.eqn} and obtain
\begin{equation} \label{1-indentity}
\sum_{k=1}^3 \int_{\bR^3}  \nabla u_k \cdot \nabla (\phi_R u_k) dx + \int_{\bR^3} (u \cdot \nabla u) \cdot (\phi_R u) dx - \int_{\bR^3} p \text{div}(\phi_R u) dx =0.
\end{equation}
We now manipulate each term in \eqref{1-indentity}. With the integration by parts,  the first term can be rewritten as
\begin{align} \nonumber
 \sum_{i=1}^3 \int_{Q_R} (\nabla u_i)  \cdot \nabla (\phi_R u_i) dx  & = \sum_{i=1}^3\int_{Q_R}\Big[u_i (\nabla u_i) \cdot (\nabla \phi_R)  + |\nabla u_i|^2 \phi_R  \Big] dx \\ \nonumber
& = \int_{Q_R} \nabla \Big(\frac{|u|^2}{2} \Big) \cdot \nabla \phi_R dx + \int_{Q_R} |\nabla \otimes u|^2 \phi_R dx \\ \label{term-1}
& = - \frac{1}{2}\int_{Q_R \setminus Q_{R/2}} |u|^2 \Delta \phi_R dx + \int_{Q_R} |\nabla \otimes u|^2 \phi_R dx.
\end{align}
Regarding the second term in \eqref{1-indentity}, we first observe that it can be rewritten as
\[
\int_{\bR^3} (u\cdot \nabla u) \cdot (\phi_R u) dx  = \int_{Q_R} \phi_R u \cdot \nabla \Big( \frac{|u|^2}{2}\Big) dx.
\]
Then, with the fact that $\textup{div}(u) =0$, we can use the integration by parts and then obtain
\begin{equation} \label{term-2}
\int_{\bR^3} (u\cdot \nabla u) \cdot (\phi_R u) dx = - \frac{1}{2} \int_{Q_R \setminus Q_{R/2}} |u|^2 \Big [ u \cdot \nabla \phi_R \Big]dx.
\end{equation}
For the last term in \eqref{1-indentity}, we again use the fact that $\textup{div}(u) =0$ to manipulate it as follows
\begin{equation}  \label{term-3}
\int_{\bR^3}  p \text{div} (\phi_R u) dx  = \int_{Q_R \setminus Q_{R/2}} p \Big[ u \cdot \nabla \phi_R \Big]dx.
\end{equation}
From the manipulations in \eqref{term-1}, \eqref{term-2} and \eqref{term-3}, we can rewrite the identity \eqref{1-indentity} as 
\begin{equation*}
\int_{Q_R} |\nabla \otimes u|^2 \phi_R(x) dx = \frac{1}{2} \int_{Q_R\setminus Q_{R/2}} |u|^2 \Delta \phi_R dx + \int_{Q_R\setminus Q_{R/2}} \Big( u \cdot \nabla \phi_R\Big) \Big[\frac{ |u|^2}{2} +p \Big] dx.
\end{equation*}
From this, we infer the following important Cacciopolli type estimate
\begin{align}  \nonumber
\int_{Q_{R/2}} |\nabla \otimes u|^2 dx & \leq \frac{1}{2}\int_{Q_R\setminus Q_{R/2}} |u|^2 |\Delta \phi_R| dx + \frac{1}{2} \int_{Q_R\setminus Q_{R/2}} |u|^3  |\nabla \phi_R| dx  \\ \label{2-identity}
& \quad \quad + \int_{Q_R\setminus Q_{R/2}} |p| |u| |\nabla \phi_R| dx.
\end{align}
We now respectively denote $I_1(R),  I_2(R)$, and $I_3(R)$ the first, the second, and the last term on the right hand side of \eqref{2-identity}. The rest of the proof is to control these terms. To control term $I_1$, we choose $q_1, r_1 \in (1, \infty)$ that satisfy
\begin{equation} \label{q-1-r-1.def}
 \frac{2}{q} + \frac{1}{q_1} =1 \quad \text{and} \quad \frac{2}{r} + \frac{1}{r_1} =1.
 \end{equation}
Then, we use H\"{o}lder's inequality with the exponents $\frac{q}{2}$ and $q_1$ for the integration in $x' = (x_1, x_2)$-variable, and then use the H\"{o}lder's inequality with the exponents $\frac{r}{2}$ and $r_1$ for the intergation in $x_3$-variable. From those calculations, we  infer that
\begin{align*} \nonumber
I_1(R) & \leq \frac{1}{2}\norm{\Delta \phi_R}_{L_{q_1, r_1} (Q_R)} \norm{u}_{L_{q,r}(Q_R\setminus Q_{R/2})}^2 \leq N R^{\frac{2}{q_1} + \frac{1}{r_1} -2} \norm{u}_{L_{q,r}(Q_R\setminus Q_{R/2})}^2 \\ 
& = N R^{1- 2\big(\frac{2}{q} + \frac{1}{r} \big)} \norm{u}_{L_{q,r}(Q_R\setminus Q_{R/2})}^2.
\end{align*}
where in the second step in the above calculation, we used the first estimate in \eqref{grad-test-fn}. By our assumption, we see that  $1- 2\big(\frac{2}{q} + \frac{1}{r} \big) \leq 0$.  From this, and since  $u \in L_{q,r}(\bR^3)$, we can infer from the last estimate that
\begin{equation} \label{I-1-term}
\lim_{R\rightarrow \infty} I_1(R) =0,
\end{equation}
Next, we control $I_2(R)$. As $q, r \in [3, \infty)$, we can choose the numbers $q_2, r_2 \in (1,\infty]$  such that
\begin{equation} \label{q-2-r-2}
\frac{3}{q} + \frac{1}{q_2} =1 \quad \text{and}  \quad \frac{3}{r} + \frac{1}{r_2} =1.
\end{equation}
If $q_2 < \infty$ and $r_2 <\infty$, as in the previous step of controlling $I_1(R)$, we apply H\"{o}lder's inequality for the integration in $x'$-variable and then  H\"{o}lder's inequality  for the integration in $x_3$-variable to obtain
\[
\begin{split}
I_{2}(R) & \leq N \norm{\nabla \phi_R}_{L_{q_2, r_2}(Q_R)} \norm{u}_{L_{q,r}(Q_R\setminus Q_{R/2})}^3 \leq N R^{\frac{2}{q_2} + \frac{1}{r_2} -1} \norm{u}_{L_{q,r}(Q_R\setminus Q_{R/2})}^3  \\
& = N R^{2 -3\big(\frac{2}{q} + \frac{1}{r}\big)} \norm{u}_{L_{q,r}(Q_R\setminus Q_{R/2})}^3 .
\end{split}
\]
Observe also that when $q_2 = \infty$ or $r_2 =\infty$, the above estimate also holds. Now, from our assumptions, we see that $2 -3\big(\frac{2}{q} + \frac{1}{r}\big) \leq 0$ and $u \in L_{q,r}(\bR^3)$. Therefore, we conclude that
\begin{equation}\label{I-2-term}
\lim_{R\rightarrow \infty} I_2(R) =0.
\end{equation}
Finally, we control $I_3(R)$. For  $q_2$ and $r_2$ defined in \eqref{q-2-r-2}, we observe that
\begin{equation*} 
\frac{1}{q_2} + \frac{2}{q} + \frac{1}{q} =1, \quad \frac{1}{r_2} + \frac{2}{r} + \frac{1}{r} =1.
\end{equation*}
Therefore, in case that $q_2 < \infty$ and $r_2 <\infty$ we can apply the H\"{o}lder's inequality for the integration in $x'$-variable using the three exponents $q_2, \frac{q}{2}$ and $q$. Then, we also apply  H\"{o}lder's inequality for the integration in $x_3$-variable using the three exponents $r_2, \frac{r}{2}$ and $r$.  In case that $q_2 =  \infty$ or $r_2 =\infty$ we can perform a similar estimate. As a result, we obtain
\[
\begin{split}
I_3(R) & \leq N(q,r) \norm{\nabla \phi_R}_{L_{q_2, r_2}(Q_R\setminus Q_{R/2})} \norm{p}_{L_{\frac{q}{2}, \frac{r}{2}}(Q_R\setminus Q_{R/2})} \norm{u}_{L_{p,r}(Q_{R} \setminus Q_{R/2})}\\
& \leq N(q,r)R^{2-3\big(\frac{2}{q} +\frac{1}{r}\big)}\norm{p}_{L_{\frac{q}{2}, \frac{r}{2}}(\bR^3)} \norm{u}_{L_{p,r}(Q_{R} \setminus Q_{R/2})}.
\end{split}
\]
Then, using Lemma \ref{pressure-lemma}, we deduce that
\[
I_3(R) \leq N(q,r)R^{2-3\big(\frac{2}{q} +\frac{1}{r}\big)}\norm{u}_{L_{q, r}(\bR^3)} \norm{u}_{L_{p,r}(Q_{R} \setminus Q_{R/2})}.
\]
Now, as $2-3\big(\frac{2}{q} +\frac{1}{r}\big) \leq 0$ and $u \in L_{q,r}(\bR^3)$, we conclude that
\begin{equation} \label{I-3-term}
\lim_{R\rightarrow \infty} I_{3}(R) =0.
\end{equation}
Now, by collecting the estimates \eqref{2-identity}, \eqref{I-1-term}, \eqref{I-2-term}, and \eqref{I-3-term}, we obtain
\[
\int_{\bR^3} |\nabla \otimes u|^2 dx = \lim_{R\rightarrow \infty} \int_{Q_{R/2}}|\nabla \otimes u|^2 dx =0.
\]
Therefore, $u$ is a constant function in $\bR^3$. From this and the fact that $u \in [L_{q,r}(\bR^3)]^3$, we conclude that $u \equiv 0$.  The proof is then completed.
\end{proof}
\section{Liouville theorems for solutions supported in strips in $\bR^3$} \label{strip-ses}

This section proves Theorem \ref{strip-thrm}. We follow the approach used in the proof of Theorem \ref{mixed-norm}.  Though the proof is similar to that of Theorem \ref{mixed-norm}, it is more involved and many important adjustments are needed. We therefore provide most of the essential steps in the proof. 
\begin{proof}[Proof of Theorem \ref{strip-thrm}]  For each $R>0$, let us denote $B_R'$ the ball in $\bR^2$ centered at the origin with radius $R$. Also, let $\phi \in C_0^\infty(\bR^2)$ be a standard cut-off function with $0 \leq \phi \leq 1$ and 
$$ \phi =1 \quad \text{on} \quad B'_{1/2} \quad \text{and} \quad \phi =0 \quad \text{on} \quad \bR^2 \setminus B'_1. $$
For each $R>2$, we defined $\phi_R(x') = \phi(\frac{x'}{R})$ for $x' \in \bR^2$. Then, it follows that
$$
\phi_R =1 \quad \text{on} \quad B'_{R/2}, \quad \text{and} \quad \phi_R =0 \quad \text{on} \quad \bR^2 \setminus B'_{R}.
$$
Moreover, there exists $N_0 >0$ such that
\[
|\nabla \phi_R| \leq \frac{N_0}{R}, \quad \text{and} \quad |D^2\phi_R| \leq \frac{N_0}{R^2} \quad \forall \ R >0.
\]
Let us also denote 
$$D_R = (B_R' \setminus B_{R/2}') \times [-R_0, R_0].$$
Arguing as in the proof of Theorem \ref{mixed-norm}, we can use $u (x) \phi_R(x')$ with $x = (x', x_3) \in \bR^3$  as a test function in \eqref{test.eqn} for the equation \eqref{NS.eqn} and obtain
\[
\sum_{k=1}^3 \int_{\bR^3}  \nabla u_k (x) \cdot  \nabla [\phi_R(x') u_k(x)] dx + \int_{\bR^3} (u \cdot \nabla u) \cdot [\phi_R (x') u(x)] dx - \int_{\bR^3}  p \text{div} [\phi_R (x') u(x) ] dx =0.
\]
Then, we use the assumption that $\textup{support}(u) \subset \bR^2\times [-R_0, R_0]$ to perform the integration by parts as in the proof of \eqref{2-identity} to obtain
\begin{equation*}
\int_{B'_{R} \times [-R_0, R_0]} |\nabla \otimes u|^2 \phi_R(x') dx = \frac{1}{2} \int_{D_R} |u|^2 \Delta \phi_R(x') dx + \int_{D_R} \Big[ (u_1, u_2) \cdot \nabla \phi_R (x')\Big] \Big[\frac{ |u|^2}{2} +p \Big] dx.
\end{equation*}
Therefore, it follows that 
\begin{align}  \nonumber
\int_{B'_{R/2} \times [-R_0, R_0]} |\nabla \otimes u|^2 dx & \leq \frac{1}{2}\int_{D_R} |u|^2 |\Delta \phi_R(x')| dx \\ \label{compact-id}
& \quad \quad \quad  + \frac{1}{2} \int_{D_R} |u|^3  |\nabla \phi_R(x')| dx + \int_{D_R} |p| |u| |\nabla \phi_R (x')| dx.
\end{align}
We now denote $J_1(R), J_2(R), J_3(R)$ the first, the middle and the last term in the right hand side of the estimate \eqref{compact-id}, respectively. To control the term $J_1(R)$ in \eqref{compact-id}, we observe that as $R>2$ , we have
\[
\omega_1(x')  \sim R^{-\alpha}, \quad \forall \ x'  = (x_1, x_2)\in B_{R}' \setminus B_{R/2}'.
\]
Therefore,
\[
J_1(R) \leq N R^{\frac{2\alpha}{q}}\int_{D_R} \Big[ |u(x)|^2  \omega_1(x')^{\frac{2}{q}} \Big] |\Delta \phi_R(x')| dx.
\]
 Then, for $q_1, r_1$ defined in \eqref{q-1-r-1.def}, we can use H\"{o}lder's inequality and perform the calculation as in the proof of \eqref{I-1-term} to obtain
\[
\begin{split}
J_1(R) & \leq N R^{\frac{2\alpha}{q}}\norm{\Delta \phi_R}_{L_{q_1, r_1} (D_R)} \norm{u}_{L_{q,r}(D_R, \omega)}^2 = N R^{\frac{2\alpha}{q} + \frac{2}{q_1}  -2} R_0^{ \frac{1}{r_1}} \norm{u}_{L_{q,r}(D_R, \omega)}^2 \\
& = N R^{\frac{2(\alpha -2)}{q}} R_0^{1-\frac{2}{r}}\norm{u}_{L_{q,r} (D_R, \omega)}^2  = N R^{-\frac{2}{3}} R_0^{1-\frac{2}{r}}\norm{u}_{L_{q,r} (D_R, \omega)}^2.
\end{split}
\]
This last estimate particularly implies that 
\[
\lim_{R\rightarrow \infty } J_1(R) =0.
\]
Now, in a similar way, we can also have
\[
J_2(R) \leq N R^{\frac{3\alpha}{q}}\int_{D_R}\Big[ |u(x)|^3 \omega(x')^{\frac{3}{q}} \Big] |\nabla \phi_R(x')| dx.
\]
 Then, with $q_2, r_2$  as in \eqref{q-2-r-2}, we can use the H\"{o}lder's inequality to infer that
\[
\begin{split}
J_2(R) & \leq NR^{\frac{3\alpha}{q}} \norm{\nabla \phi_R}_{L_{q_2, r_2}(D_R)}  \norm{u}_{L_{q,r}(D_R, \omega)}^3 = N R^{\frac{3\alpha}{q} + \frac{2}{q_2} -1} R_0^{\frac{1}{r_2}} \norm{u}_{L_{q, r}(D_R, \omega)}^3 \\
& = N R^{1+ \frac{3(\alpha-2)}{q}} R_0^{1-\frac{3}{r}} \norm{u}_{L_{q,r}(D_R)}^3  =  N R_0^{1-\frac{3}{r}} \norm{u}_{L_{q,r}(D_R ,\omega)}^3.
\end{split}
\]
As $u \in L_{q,r}(\bR^3)$, we obtain that
\[
\lim_{R\rightarrow \infty} J_2(R) =0.
\]
Finally, we control $J_3(R)$. This can be done exactly the same as the estimates of the other terms.  Again, we have
\[
J_3(R) \leq N R^{\frac{3\alpha}{q}}\int_{D_R} \Big[ u(x) \omega_1(x')^{\frac{1}{q}} \Big] \Big[ p(x) \omega_1(x')^{\frac{2}{q}} \Big] |\nabla \phi_R(x')| dx.
\]
Now, observe that since $q \in [3, 6]$,  $\alpha = \frac{6-q}{q} \in [0, 2)$. Hence, it follows from Lemma \ref{remark-weight} that $\omega_1 \in A_\frac{q}{2}(\bR^2)$. Therefore, we are able to apply Lemma \ref{pressure-lemma}. Then, we can perform the calculation using H\"{o}lder's inequality and Lemma \ref{pressure-lemma} to obtain
\[
\begin{split}
J_3(R) & \leq NR^{\frac{3\alpha}{q}} \norm{\nabla \phi_R}_{L_{q_2, r_2}(D_R)}  \norm{p}_{L_{\frac{q}{2}, \frac{r}{2}}(\bR^3, \omega)} \norm{u}_{L_{q, r}(D_R, \omega)}\\
& \leq N R^{\frac{3\alpha}{q} + \frac{2}{q_2} -1}  R_0^{\frac{1}{r_2}} \norm{u}_{L_{q, r}(\bR^3,\omega)} \norm{u}_{L_{q,r}(D_R, \omega)} \\
& = N  R_0^{1- \frac{3}{r}} \norm{u}_{L_{q, r}(\bR^3,\omega)} \norm{u}_{L_{q,r}(D_R, \omega)}.
\end{split}
\]
From this, and with the assumption that $u \in L_{q, r}(\bR^3)$, we obtain that
\[
\lim_{R\rightarrow \infty} J_3(R) =0.
\]
Collecting all of the estimates of $J_k(R)$ with $k =1, 2,3$ and using \eqref{compact-id}, we infer that
\[
\int_{\bR^3} |\nabla \otimes u|^2 dx = \lim_{R\rightarrow \infty} \int_{B_{R/2}' \times[-R_0, R_0]} |\nabla \otimes u|^2 dx =0.
\]
This estimate implies that $u$ is a constant function.  This together with the fact that $u \in [L_{q, r}(\bR^3 ,\omega)]^3$ and $\alpha \in [0, 2)$, we conclude that $u \equiv 0$.  The proof is completed.
\end{proof}
We conclude this session with the following remark.
\begin{remark} \label{weighted-thrm-rm} By combining the proofs of Theorem \ref{mixed-norm} and Theorem \ref{strip-thrm} and using  \eqref{Riesz-1}, we can see that the assertion of Theorem \ref{mixed-norm} also holds if $u \in L_q(\bR^3, \omega)$ where
\[
\norm{u}_{L_q(\bR^3, \omega)} = \left( \int_{\bR^3} |u(x)|^q \omega(x) dx \right)^{\frac{1}{q}} 
\]
for $q \in [3, \frac{9}{2}]$ and 
\[
\omega(x) = (1 + |x|)^{-(\frac{9}{q}-2)}, \quad \forall \ x\in \bR^3.
\]
\end{remark}
\section{Liouville theorems for solutions supported in cylinders} \label{cyl-section}

This session provides the proof of Theorem \ref{cyl-thrm}. Recall that for each $R >0$, we denote the cylinder along the $x_3$-axis in $\bR^3$ by
\[
C_R = B_R' \times \mathbb{R}, 
\]
where $B_R'$ is the ball in $\bR^2$ centered at the origin with radius $R$. The approach of the proof is similar to that of Theorem \ref{strip-thrm}. However, important details in the calculation needed to be adjusted.
\begin{proof}[Proof of Theorem \ref{cyl-thrm}] We use the approach as in the proof of Theorem \ref{strip-thrm}.  For each $R>0$.  Let $\phi \in C_0^\infty(\bR)$ be a standard cut-off function with $0 \leq \phi \leq 1$ and 
$$ \phi =1 \quad \text{on} \quad [-1/2, 1/2] \quad \text{and} \quad \phi =0 \quad \text{on} \quad \bR \setminus (-1, 1). $$
For each $R>2$, we defined $\phi_R(s) = \phi(\frac{s}{R})$ with $s \in \bR$. Then, it follows that
$$
\phi_R =1 \quad \text{on} \quad [-R/2, R/2] , \quad \text{and} \quad \phi_R =0 \quad \text{on} \quad \bR  \setminus (-R, R).
$$
Moreover, there exists $N_0 >0$ such that
\[
|\phi_R'| \leq \frac{N_0}{R}, \quad \text{and} \quad |\phi_R'| \leq \frac{N_0}{R^2} \quad \forall \ R >0.
\]
We also define 
$$E_R =  B_{R_0}' \times [ (-R, R) \setminus (-R/2, \times R/2)].$$
Arguing as in the proof of Theorem \ref{mixed-norm}, we can use $u (x) \phi_R(x_3)$ with $x = (x_1, x_2, x_3) \in \bR^3$ as the test function in \eqref{test.eqn} for the equations \eqref{NS.eqn}.  Then, we obtain
\begin{equation*} 
\sum_{k=1}^3 \int_{\bR^3}  \nabla u_k  \cdot  \nabla [\phi_R(x_3) u_k] dx + \int_{\bR^3} (u \cdot \nabla u) \cdot [\phi_R(x_3) u] dx - \int_{\bR^3} p \text{div} [\phi_R(x_3) u] dx =0.
\end{equation*}
From this, we can perform the integration by parts as in the proof of Theorem \ref{mixed-norm} using the fact that $\textup{support(u)} \subset C_{R_0}$ to obtain

\begin{equation*}
\int_{C_R} |\nabla \otimes u|^2 \phi_R(x_3) dx = \int_{E_{R}} |u|^2 \phi_R''(x_3)  dx  + \int_{E_R}  u_1(x) \phi_R'(x_3)  \Big[\frac{ |u|^2}{2} +p \Big] dx.
\end{equation*}
Therefore, it follows that
\begin{align}  \label{C-compact-id}
\int_{C_{R/2}} |\nabla \otimes u|^2 dx & \leq \frac{1}{2}\int_{E_R} |u|^2 |\phi_R''(x_3)| dx   + \frac{1}{2} \int_{E_R} |u|^3  |\phi_R'(x_3)| dx + \int_{E_R} |p| |u| |\phi_R'(x_3)| dx.
\end{align}
We then denote $K_1(R),  K_2(R),  K_3(R)$ the first, the second, and the last term in the right hand side of \eqref{C-compact-id}, respectively.  To control the term $K_1(R)$ in \eqref{C-compact-id}, we observe that as $R>2$, $\omega_2(x_3) \sim R^{-\alpha}$ for all $x_3$ such that $R/2< |x_3|<R$. Therefore, we have 
\[
K_1(R) \leq R^{\frac{2\alpha}{r}} \int_{E_R} |u(x)|^2 \omega_2(x_3)^{\frac{2}{r}} |\phi_R''(x_3)| dx.
\]
Then, for $q_1, r_1$ defined in \eqref{q-1-r-1.def}, we can use H\"{o}lder's inequality as in the proof of \eqref{I-1-term} to obtain
\[
\begin{split}
K_1(R) & \leq N R^{\frac{2 \alpha}{r}} \norm{\phi_R''}_{L_{q_1, r_1}(E_R)}  \norm{u}_{L_{q, r}(E_R, \omega)}^2 \\
& = N R_0^{\frac{2}{q_1}} R ^{-1+ \frac{1}{r_2} + \frac{2 \alpha}{r}}\norm{u}_{L_{q, r}(E_R, \omega)}^2 \\
& = N R_0^{2- \frac{4}{q}} R^{\frac{2(\alpha-1)}{r}} \norm{u}_{L_{q, r}(E_R, \omega)}.
\end{split}
\]
 As a consequence,  we see that
\[
\lim_{R\rightarrow \infty } K_1(R) =0.
\]
 Now, let $q_2, r_2$ be as in \eqref{q-2-r-2}. In a similar way as we we just did, we also have
\[
\begin{split}
K_2(R) & \leq N R^{\frac{3 \alpha}{r} }\int_{E_R} |u(x)|^3 \omega_2(x_3)^{\frac{3}{r}} |\phi_R'(x_3)| dx  \\
&  \leq N R^{\frac{3 \alpha}{r} } \norm{\phi_R'}_{L_{q_2, r_2}(E_R )}  \norm{u}_{L_{q, r}(E_R,\omega)}^3 \\
& = N R^{\frac{3 \alpha}{r} + \frac{1}{r_2} -1} R_0^{\frac{2}{q_2}} \norm{u}_{L_{q,r}(E_R, \omega)}^3  = N(q, R_0) R^{\frac{3 (\alpha-1)}{r}}  \norm{u}_{L_{q, r}(E_R, \omega)}^3.
\end{split}
\]
Therefore, we obtain that
\[
\lim_{R\rightarrow \infty} K_2(R) =0.
\]
Finally, we control $K_3(R)$ in the same fashion. We first observe that
\[
K_3(R) \leq N R^{\frac{3 \alpha}{r}}\int_{E_R} \Big[ |u(x)| \omega_2(x_3)^{\frac{1}{r}}\Big] \Big[ |p(x)| \omega_2(x_3)^{\frac{2}{r}} \Big] |\phi'_R(x_3)| dx.
\]
Now, we note that as $\alpha \in [0, 1)$, it follows from Lemma \ref{remark-weight} that $\omega_2 \in A_{\frac{r}{2}}(\bR)$. Therefore, we can use Lemma \ref{pressure-lemma} with this weight.
Then, as $q_2, r_2$ defined in \eqref{q-2-r-2}, we can use H\"{o}lder's inequality and Lemma \ref{pressure-lemma} to see that
\[
\begin{split}
K_3(R) & \leq NR^{\frac{3 \alpha}{r}} \norm{\phi_R'}_{L_{q_2, r_2}(E_R)}  \norm{p}_{L_{\frac{q}{2}, \frac{r}{2}}(\bR^3, \omega)} \norm{u}_{L_{q, r}(E_R, \omega)}\\
& \leq N R^{\frac{3 \alpha}{r} + \frac{1}{r_2} -1}  R_0^{\frac{2}{q_2}} \norm{u}_{L_{q,r}(\bR^3, \omega)} \norm{u}_{L_{q,r}(E_R, \omega)} \\
& = N(R_0, q)  R^{\frac{3(\alpha-1)}{r}}  \norm{u}_{L_{q, r}(\bR^3, \omega)} \norm{u}_{L_{q,r}(E_R, \omega)}.
\end{split}
\]
Then, we also obtain 
\[
\lim_{R\rightarrow \infty} K_3(R) =0.
\]
Collecting all of the estimates of $J_k(R)$ with $k =1, 2,3$ and using \eqref{compact-id}, we infer that
\[
\int_{\bR^3} |\nabla \otimes u|^2 dx = \lim_{R\rightarrow \infty} \int_{C_{R/2}} |\nabla \otimes u|^2 dx =0.
\]
This implies that $u$ is a constant function in $\bR^3$. From this and the fact that $u \in [L_{q,r}(\bR^3, \omega)]^3$ and with $\alpha \in [0,1)$, we conclude that $u \equiv 0$.  The proof is completed.
\end{proof}
\begin{remark} We note that we can not take $\alpha =1$ in the above proof because with  $\alpha =1$,  $\omega_2$ is not in $A_{\frac{r}{2}}(\bR)$.
\end{remark}

\end{document}